\newtheorem{theorem}{Theorem}[section]
\newtheorem{lemma}{Lemma}[section]
\newtheorem{corollary}{Corollary}[section]
\numberwithin{equation}{section}
\newcommand{\FF}{\mathbb{F}}
\newcommand{\CC}{\mathbb{C}}
\def\dim{\mathrm{dim}} 
\def\Hom{\mathrm{Hom}}
\def\Ind{\mathrm{Ind}} 
\def\GL{\mathrm{GL}}
\def\SL{\mathrm{SL}}
\def\veps{\varepsilon}
\def\Hom{\mathrm{Hom}}
\def\diag{\mathrm{diag}}
\newcommand{\Sp}{\mathrm{Sp}}
\renewcommand{\@makefnmark}{\mbox{\textsuperscript{}}}
\def\adots{\mathinner{\mkern2mu\raise0pt\hbox{.}  
\mkern2mu\raise4pt\hbox{.}\mkern1mu
\raise7pt\vbox{\kern7pt\hbox{.}}\mkern1mu}}
\begin{document}

\bibliographystyle{amsplain}

\title{Klyachko models of $p$-adic special linear groups}
\author {Joshua M. Lansky and C. Ryan Vinroot}
\date{}

\maketitle



\begin{abstract}
We study Klyachko models of $\SL(n, F)$, where $F$ is a nonarchimedean local field.  In particular, using results of Klyachko models for $\GL(n, F)$ due to Heumos, Rallis, Offen and Sayag, we give statements of existence, uniqueness, and disjointness of Klyachko models for admissible representations of $\SL(n, F)$, where the uniqueness and disjointness are up to specified conjugacy of the inducing character, and the existence is for unitarizable representations in the case $F$ has characteristic $0$.  We apply these results to relate the size of an $L$-packet containing a given representation of $\SL(n, F)$ to the type of its Klyachko model, and we describe when a self-dual unitarizable representation of $\SL(n, F)$ is orthogonal and when it is symplectic.
\end{abstract}

\section{Introduction\protect\footnote{2000 {\em Mathematics Subject Classification. } Primary 22E50}\protect\footnote{{\em Key words and phrases. } Klyachko model, $p$-adic special linear group, multiplicity one, $L$-packets, self-dual representations }}

Let $F$ be a field, let $U_m(F)$ denote the group of $m$-by-$m$ unipotent upper triangular matrices over $F$, and let $M_{m,l}(F)$ be the set of $m$-by-$l$ matrices over $F$ (not necessarily invertible).  For each integer $k$ satisfying $0 \leq 2k \leq n$, define the subgroup $G_k$ of $\GL(n, F)$ by:
\begin{equation} \label{GkDefn}
G_k = \left\{ \left( \begin{array}{cc} N & X \\   & S \end{array} \right) \Big| N \in U_{n-2k}, S \in \Sp(2k, F), X \in M_{n-2k, 2k}(F) \right \}.
\end{equation}
Fix a nontrivial additive character $\theta: F^{+} \rightarrow \CC$, and for each $k$, define a character $\psi_k$ on $G_k$ as follows:
\begin{equation}  \label{psidefn}
\text{If } g \in G_k, g = \left( \begin{array}{cc} N & X \\   & S \end{array} \right), \text{ and } N = (a_{ij}), \text{ then define } \psi_k(g) = \theta\left( \sum_{i=1}^{n-2k-1} a_{i, i+1} \right).
\end{equation}
In other words, $\psi_k$ is only non-trivial on the unipotent factor of $G_k$.  When $n = 2m$, then $\psi_m$ is just the trivial character on the subgroup $G_m = \Sp(2m, F)$, and when $k=0$, $\psi_k$ is a nondegenerate character of the unipotent subgroup $U_n(F)$ of $\GL(n, F)$.

Suppose that $F = \FF_q$ is a finite field, let $G = \GL(n, \FF_q)$, and for each $k$, $0 \leq 2k \leq n$, define the induced representation $T_k = \Ind_{G_k}^G(\psi_k)$.  Klyachko \cite{Kla83} proved that for any complex irreducible representation $(\pi, V)$ of $G$, $\dim_{\CC} \, \Hom_G(\pi, T_k) \leq 1$ for every $k$, and there exists a unique $k$, $0 \leq 2k \leq n$, such that $\dim_{\CC} \, \Hom_G(\pi, T_k) = 1$.

We call an embedding of the representation $(\pi, V)$ in the induced representation $T_k$ a {\em Klyachko model} of the representation $\pi$.  Klyachko's original result states that every irreducible representation of $\GL(n, \FF_q)$ has a unique Klyachko model, and in particular, all of the induced representations $T_k$ are multiplicity-free, and $T_k$ and $T_l$ have no isomorphic sub-representations when $k \neq l$.

Now consider the case that $F$ is a nonarchimedean local field, with $G = \GL(n, F)$.  For each $k$, $0 \leq 2k \leq n$, define the representation $T_k$ by
$$ T_k = \Ind_{G_k}^{G} (\psi_k),$$
where $\Ind$ denotes the ordinary (non-normalized) induced representation for a locally compact totally disconnected group.  In this case, there is the following result on Klyachko models of representations of $\GL(n, F)$.

\begin{theorem}[Heumos and Rallis, Offen and Sayag] \label{LocalCase}

Let $G = \GL(n, F)$, where $F$ is a nonarchimedean local field.  Let $(\pi, V)$ be any irreducible admissible representation of $G$.  We have the following:
\begin{enumerate}
\item[(1)]   $\displaystyle{\sum_{k=0}^{\lfloor n/2\rfloor}\dim_{\CC} \, \Hom_H(\pi, T_k) \leq 1.}$
\item[(2)] If $F$ has characteristic $0$ and $(\pi,V)$ is unitarizable, then there exists a unique $k$ such that $\dim_{\CC} \, \Hom_G(\pi, T_k) = 1$.
\end{enumerate}
\end{theorem}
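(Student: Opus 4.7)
The plan is to treat the two statements separately, since they rely on genuinely different techniques. Part (1) is a purely harmonic-analytic statement about Gelfand-type properties of the pairs $(G,G_k,\psi_k)$, while Part (2), being an existence assertion for \emph{every} irreducible unitarizable representation, inevitably requires Tadi\'c's classification of the unitary dual of $\GL(n,F)$.

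For the uniqueness part of (1), I would first establish individual multiplicity one, i.e.\ $\dim_\CC\Hom_G(\pi,T_k)\le 1$ for each fixed $k$. The approach is a Gelfand--Kazhdan style argument: choose the anti-involution $\sigma(g)=w g^{-t}w^{-1}$ for a suitable $w$ that preserves the block decomposition in \eqref{GkDefn} and acts as an anti-involution on $\Sp(2k,F)$ (e.g.\ via $\Sp(2k,F)$ itself being self-dual up to an inner twist). One checks $\sigma(G_k)=G_k$ and $\psi_k\circ\sigma=\psi_k$, and then shows that every distribution $D$ on $G$ satisfying $D(\ell_{g_1}r_{g_2}\phi)=\psi_k(g_1)\psi_k(g_2)^{-1}D(\phi)$ for $g_1,g_2\in G_k$ must be $\sigma$-invariant. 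This requires the standard geometric step of analyzing $G_k$-double cosets in $G$ and applying Bernstein's localization principle, ruling out transverse distributions on non-$\sigma$-stable orbits and handling the closed orbit contribution via a nilpotent/Whittaker argument on the unipotent block $N$.

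The disjointness statement (distinct $T_k$'s share no irreducible constituent) I would prove via Bernstein--Zelevinsky derivatives. The $2k$-th derivative of $T_k$ can be computed by the geometric lemma applied to the parabolic with Levi $\GL(n-2k,F)\times \GL(2k,F)$; one finds that $T_k^{(2k)}$ is, up to normalization, the Whittaker model of $\GL(n-2k,F)$ tensored with a suitable $\Sp(2k,F)$-invariant functional, while higher derivatives $T_k^{(j)}$ with $j>2k$ vanish. Thus the \emph{highest nonvanishing derivative level} detects $k$ intrinsically from any realization of $\pi$ inside $T_k$, and so $\pi\hookrightarrow T_k$ and $\pi\hookrightarrow T_l$ with $k<l$ would force incompatible vanishing/nonvanishing of $\pi^{(2l)}$. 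The delicate point is the precise computation of $T_k^{(j)}$ and showing the requisite nonvanishing is genuinely preserved when passing to an irreducible subrepresentation.

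For Part (2), I would argue by induction along Tadi\'c's classification. The base cases are the Speh representations $U(\delta,m)$ built from discrete series $\delta$ of $\GL(d,F)$; here a direct construction of a symplectic-type functional on $U(\delta,m)$ shows it admits a $T_{md/2}$-model (this is essentially the content of the Offen--Sayag work). The inductive step is a \emph{hereditary} property: if $\pi_1$ admits a $T_{k_1}$-model on $\GL(n_1,F)$ and $\pi_2$ a $T_{k_2}$-model on $\GL(n_2,F)$, then any irreducible quotient of $\pi_1\times\pi_2$ admits a $T_{k_1+k_2}$-model on $\GL(n_1+n_2,F)$; this is proved by an application of the geometric lemma identifying $T_{k_1+k_2}$ as a subquotient of $\Ind(T_{k_1}\boxtimes T_{k_2})$. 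Since Tadi\'c's theorem writes every unitarizable $\pi$ as an irreducible parabolic induction of (shifts of) Speh representations, assembling the models at each factor yields a Klyachko model for $\pi$, and uniqueness of $k$ then follows from Part (1). The main obstacle here is the hereditary lemma, specifically, verifying that the model actually descends to the given irreducible quotient and that the index $k$ adds correctly; this is where the combinatorial analysis of the $\Sp$-blocks in the geometric lemma becomes technical.
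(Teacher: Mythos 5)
This theorem is not proved in the paper at all: it is quoted from the literature, with the pure symplectic multiplicity one and the cases $n\le 4$ due to Heumos and Rallis, and the general statements (1) and (2) due to Offen and Sayag. There is therefore no internal proof to compare your attempt against; the only meaningful benchmark is the cited papers, and measured against those your text is a research program rather than a proof. You have correctly identified the overall architecture --- uniqueness and disjointness via harmonic analysis and Bernstein--Zelevinsky derivatives, existence via Tadi\'c's classification of the unitary dual combined with a hereditary property of Klyachko models under parabolic induction and a base case for Speh representations --- but every one of your steps (the double-coset analysis, the derivative computation, the hereditary lemma, the Speh base case) is itself a paper-length theorem, and none is actually carried out.

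Two concrete points where the sketch would not go through as written. First, a direct Gelfand--Kazhdan argument for the mixed pairs $(G, G_k, \psi_k)$ with $0 < 2k < n$ is not what the literature does and would be very delicate: Heumos and Rallis run that argument only for the pure symplectic pair $(\GL(2m,F), \Sp(2m,F))$, and Offen and Sayag obtain multiplicity one and disjointness for general $k$ by relating $\Hom_{G_k}(\pi,\psi_k)$ to $\Sp(2k,F)$-invariant functionals on Bernstein--Zelevinsky derivatives of $\pi$ and then invoking the Heumos--Rallis result; moreover your derivative bookkeeping is off --- the derivative carrying the symplectic period is the $(n-2k)$-th, which lives on $\GL(2k,F)$, not the $2k$-th --- and the claimed vanishing of higher derivatives of the full induced representation $T_k$ is not justified. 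Second, the base case of (2), that Speh representations admit symplectic models, is not obtained by ``a direct construction of a symplectic-type functional'': Offen and Sayag prove it by global methods (periods of residual Eisenstein series) together with a local--global argument, which is also where the characteristic-zero hypothesis enters; your sketch gives no indication of why that hypothesis is needed. If the goal was to prove Theorem \ref{LocalCase}, these gaps are fatal; if the goal was to explain why it holds, the account is broadly faithful to Offen--Sayag but should be presented, as the paper does, as a citation.
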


Heumos and Rallis \cite{HeuRal90} proved that, if $n = 2m$, then for any $\pi$, $\dim_{\CC} \, \Hom_H(\pi, T_{m}) \leq 1$; that is, any irreducible admissible representation has a unique {\em symplectic model} if one exists.  They also proved that in this case, the set of admissible representations of $\GL(n, F)$ which have symplectic models is disjoint with the set of representations which have Whittaker models.  Finally, Heumos and Rallis proved statements (1) and (2) of Theorem \ref{LocalCase} for $n \leq 4$ and conjectured that these statements hold for all $n$.  Theorem \ref{LocalCase} was proved completely by Offen and Sayag in a series of papers \cite{OffSay07,OffSay081,OffSay082}.

Now notice that the groups $G_k$ are also subgroups of the special linear group $\SL(n, F)$.  In this paper, we study Klyachko models of the group $\SL(n, F)$ when $F$ is a nonarchimedean local field.  Since there is more than one orbit of nondegenerate characters of the unipotent subgroup of $\SL(n, F)$, we must consider conjugates of the characters $\psi_k$ in (\ref{psidefn}) in these models.  Our main result, Theorem \ref{main}, is the analogue of Theorem \ref{LocalCase} for the special linear group.  The main difference in the result is in the statement of uniqueness and disjointness of Theorem \ref{main}, where we can only obtain uniqueness and disjointness of Klyachko models up to conjugation of the character $\psi_k$ by a certain group.

We give two applications of Theorem \ref{main}.  In the first, Corollary \ref{Lpacket}, we relate the type of the Klyachko model of a representation of $\SL(n, F)$ to the size of the $L$-packet containing that representation.  In the second, Corollary \ref{SLnselfdual}, we describe when a self-dual unitarizable representation of $\SL(n, F)$ is orthogonal and when it is symplectic.
\\
\\
\noindent {\bf Acknowledgments. } The first-named author was supported by NSF grant DMS-0854844, and the second-named author was supported by NSF grant DMS-0854849.

\section{Klyachko models of special linear groups}

From now on, we let $F$ be a nonarchimedean local field, let $G = \GL(n, F)$, let $H = \SL(n, F)$, and let $G_k$ be as in (\ref{GkDefn}) for each $k$ such that $0 \leq 2k \leq n$.  Note that $G\cong H\ltimes D$, where $D\cong F^\times$ is the group of matrices of the form $\mathrm{diag} (x,1,\ldots 1)$ for $x\in F^\times$.  We will often identify $G/H$ with $D$ and hence with $F^\times$.  Note that $D$ normalizes each $G_k$, and $H$ contains each $G_k$.  The open subgroup $HZ$ of $G$ is normal, and $G/HZ\cong D/D^n\cong F^\times/(F^\times)^n$.  In particular, $HZ$ has finite index in $G$.  

Let $(\rho ,W)$ be a representation of $H$.  By~\cite[Prop.~2.2]{Tad92}, there is an $H$-embedding of $(\rho ,W)$ as a direct summand of some irreducible admissible representation $(\pi,V)$ of $G$.  Then $W$ is stable under the action of $Z$; hence we can view $\rho$ as a representation of $HZ$.  From results in \cite{BusKut93,Tad92}, we know that if $\rho$ is a unitarizable then $(\pi,V)$ can also be taken to be unitarizable.

Let $(\rho,W)$ be a representation of $H$.  Given any $g\in G$, define $^g\rho$ to be the representation of $H$ on $W$ given by $^g \rho (h) = \rho (g^{-1}hg)$.  Denote by $G(\rho)$ the subgroup $\{ g\in G\vert {}^g \rho\cong\rho\}$ of $G$.  We note that $G(\rho)$ contains $HZ$, hence is of finite index in $G$.  We let $D(\rho) = D\cap G(\rho)$.

If $g\in G$ normalizes $G_k$, and $\psi$ is a character of $G_k$, denote by $^g \psi$ the character $\alpha\mapsto \psi (g^{-1}\alpha g)$.  For every $x\in F^{\times}$, and $k$ such that $0 \leq 2k \leq n$, define a character ${^x \psi_k}$ on $G_k$, with the same notation as in (\ref{psidefn}), by
$$ {^x \psi_k}(g) = \theta \left(xa_{1,2} + \sum_{i=2}^{n-2k-1} a_{i, i+1} \right).$$
If $g\in G$ normalizes $G_k$ and has image $x\in F^\times$ under the map $G\mapsto G/H\cong D \cong F^\times$, we have that $^g \psi = {}^x\psi$.  We first prove a lemma relating models for representations of $G$ with those of $H$.

\begin{lemma}
\label{embed}
Let $(\rho,W)$ be an irreducible admissible representation of $H$ and let $(\pi,V)$ be an irreducible admissible representation of $G$ that contains $(\rho,W)$ upon restriction.  Suppose that for some $\gamma\in D$ and some $k$ with $0\leq 2k\leq n$, $(\rho,W)$ embeds in $\Ind_{G_k}^H({}^\gamma\psi_k)$.  Then $(\pi,V)$ embeds in $T_k$.
\end{lemma}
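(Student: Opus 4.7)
The plan is to combine Frobenius reciprocity with a conjugation-by-$\gamma$ trick, using the decomposition of $\pi|_H$ into irreducible $H$-subrepresentations. Applying Frobenius reciprocity to the hypothesis $\rho\hookrightarrow\Ind_{G_k}^H({}^\gamma\psi_k)$ yields a nonzero linear functional $\lambda\in\Hom_{G_k}(W,{}^\gamma\psi_k)$, i.e., satisfying $\lambda(\rho(g)w)=\psi_k(\gamma^{-1}g\gamma)\lambda(w)$ for all $g\in G_k$ and $w\in W$.

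Next, since $\gamma\in D\subset G$ normalizes $H$ and $G_k$, the subspace $W':=\pi(\gamma^{-1})W$ of $V$ is $H$-stable and $H$-irreducible (isomorphic to ${}^{\gamma^{-1}}\!\rho$). Define $\tilde\lambda:W'\to\CC$ by $\tilde\lambda(v)=\lambda(\pi(\gamma)v)$; this is well-defined since $\pi(\gamma)v\in W$. Using the identity $\pi(\gamma)\pi(g)=\pi(\gamma g\gamma^{-1})\pi(\gamma)$ together with $\gamma g\gamma^{-1}\in G_k$, a direct computation shows
$$\tilde\lambda(\pi(g)v)=\lambda(\pi(\gamma g\gamma^{-1})\pi(\gamma)v)=\psi_k(\gamma^{-1}(\gamma g\gamma^{-1})\gamma)\lambda(\pi(\gamma)v)=\psi_k(g)\tilde\lambda(v),$$
so $\tilde\lambda\in\Hom_{G_k}(W',\psi_k)$. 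The equivariance character has thus been converted from ${}^\gamma\psi_k$ to $\psi_k$ at the cost of moving from $W$ to $W'$.

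The crucial step is extending $\tilde\lambda$ to a $(G_k,\psi_k)$-equivariant functional $\Lambda$ on all of $V$. For this we invoke the result of Tadi\'c~\cite{Tad92} that $\pi|_H$ decomposes as a finite multiplicity-free direct sum of irreducible admissible $H$-representations, $V=\bigoplus_i W_i$. Since $W'$ is an irreducible $H$-submodule of $V$, applying Schur's lemma to the projections $V\to W_i$ restricted to $W'$ and invoking multiplicity-freeness forces $W'$ to coincide with exactly one of the summands $W_i$. Define $\Lambda:V\to\CC$ to equal $\tilde\lambda$ on this summand and to vanish on the remaining $W_j$. Because each $W_j$ is $G_k$-stable (as $G_k\subset H$), the resulting $\Lambda$ inherits the $(G_k,\psi_k)$-equivariance of $\tilde\lambda$.

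Finally, $\Lambda\neq 0$, so Frobenius reciprocity gives a nonzero element of $\Hom_G(\pi,\Ind_{G_k}^G(\psi_k))=\Hom_G(\pi,T_k)$, which must be an embedding since $\pi$ is irreducible. The main obstacle is the extension step: extending by zero on an arbitrary linear complement of $W'$ would destroy $G_k$-equivariance, so one genuinely needs a $G_k$-stable complement, supplied precisely by the multiplicity-free $H$-decomposition from Tadi\'c's results.
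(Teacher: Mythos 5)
Your proof is correct, but it takes a genuinely different route from the paper's. You stay inside the space $V$: you use the conjugation $W'=\pi(\gamma^{-1})W$ to convert a $(G_k,{}^\gamma\psi_k)$-equivariant functional into a $(G_k,\psi_k)$-equivariant one, and then extend it by zero across an $H$-stable complement supplied by the semisimplicity of $\pi|_H$ (your appeal to multiplicity-freeness is more than you need --- any $H$-stable complement of the irreducible summand $W'$ is automatically $G_k$-stable, so semisimplicity of the restriction alone suffices). The paper instead goes upward: it views $\rho$ as a representation of $HZ$, applies Frobenius reciprocity and induction to get $\Hom_G(\Ind_{HZ}^G(\rho),\Ind_{G_k}^G({}^\gamma\psi_k))\neq(0)$, concludes that some constituent $\pi_0$ of $\Ind_{HZ}^G(\rho)$ embeds in $T_k$, and then invokes \cite[Cor.~2.5]{Tad92} (every such constituent is a one-dimensional twist of $\pi$) together with the observation that characters of $G$ are trivial on $G_k\subset H$ to transfer the embedding from $\pi_0$ to $\pi$. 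Your version avoids both the classification of the constituents of $\Ind_{HZ}^G(\rho)$ and the twisting argument, at the cost of invoking the decomposition of $\pi|_H$ into irreducible summands; the paper's version is more formal but sets up exactly the Mackey-theoretic bookkeeping that is reused in the proof of Theorem~\ref{main}, which is presumably why the authors chose it. Both arguments are sound.
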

\begin{proof}
Since $\rho$ embeds in $\Ind_{G_k}^H({}^\gamma\psi_k)$, we have by Frobenius reciprocity,
$$\Hom_{HZ}(\rho,\Ind_{G_k}^{HZ}({}^\gamma\psi_k))\neq (0),$$
where we view $\rho$ as a representation of $HZ$ as discussed above.
Inducing to $G$, it follows that
$$\Hom_{G}(\Ind_{HZ}^G(\rho),\Ind_{G_k}^{G}({}^\gamma\psi_k))\neq (0).$$
This implies that one of the constituents $\pi_0$ of $\Ind_{HZ}^G(\rho)$ embeds in $T_k$.  
By~\cite[Cor.~2.5]{Tad92}, $\pi_0$ is isomorphic to a twist of $\pi$ by a one-dimensional character of $G$.  Since characters are trivial on $G_k$, it follows that $\pi$ also embeds in $T_k$.
\end{proof}

We now prove our main result.
\begin{theorem}
\label{main}
Let $H = \SL(n, F)$, where $F$ is a nonarchimedean local field.  Let $(\rho, W)$ be an irreducible admissible representation of $H$.  We have the following:
\begin{enumerate}
\item[(1)] For any $\gamma\in D$,
\begin{equation}
\label{unique}
\sum_{k=0}^{\lfloor n/2\rfloor}\dim_{\CC} \, \Hom_H(\rho, \Ind_{G_k}^H({}^\gamma\psi_k) ) \leq 1.
\end{equation}
Moreover, if this sum is nonzero for some $\gamma\in D$, then such a $\gamma$ is unique modulo $D(\rho)$.
\item[(2)] If $F$ has characteristic $0$ and $\rho$ is unitarizable, then there exists a unique integer $k$ and $\gamma\in D$ (unique modulo $D(\rho)$) such that $\dim_{\CC} \, \Hom_H(\rho, \Ind_{G_k}^H({^\gamma \psi_k})) = 1$.
\end{enumerate}
\end{theorem}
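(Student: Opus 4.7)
The plan is to reduce both parts to Theorem~\ref{LocalCase} by passing to an irreducible admissible representation $(\pi,V)$ of $G$ that contains $(\rho,W)$ upon restriction, as guaranteed by the setup preceding Lemma~\ref{embed}. For part~(2) the cited results of~\cite{BusKut93,Tad92} further allow $\pi$ to be chosen unitarizable when $\rho$ is. The key input is the multiplicity-free Clifford-theoretic decomposition
\[
\pi|_H \;\cong\; \bigoplus_{d \in D/D(\rho)} {}^d\rho
\]
with pairwise non-isomorphic summands; this follows from those same works applied via the finite-index normal subgroup $HZ$, together with the observation that $G=HD$, $H\cap D=\{1\}$, and $H\subseteq G(\rho)$ imply $G(\rho)=HD(\rho)$, so $G/G(\rho)\cong D/D(\rho)$ parametrizes the $G$-orbit of $\rho$.

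Given this decomposition, I would restrict further to $G_k$ and apply smooth Frobenius reciprocity on $G$ to get
\[
\Hom_G(\pi,T_k) \;\cong\; \Hom_{G_k}(\pi|_{G_k},\psi_k) \;\cong\; \bigoplus_{d \in D/D(\rho)} \Hom_{G_k}\bigl(({}^d\rho)|_{G_k},\psi_k\bigr).
\]
Because $D$ normalizes $G_k$, a short direct check using ${}^g\psi(\alpha)=\psi(g^{-1}\alpha g)$ gives
\[
\Hom_{G_k}\bigl(({}^d\rho)|_{G_k},\psi_k\bigr) \;\cong\; \Hom_{G_k}\bigl(\rho|_{G_k},\,{}^{d^{-1}}\psi_k\bigr) \;\cong\; \Hom_H\bigl(\rho,\Ind_{G_k}^H({}^{d^{-1}}\psi_k)\bigr),
\]
the last isomorphism by Frobenius reciprocity on $H$. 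Since ${}^\delta\rho\cong\rho$ for $\delta\in D(\rho)$, this summand depends only on the coset of $\gamma:=d^{-1}$ in $D/D(\rho)$, and summing over $k$ yields the clean identity
\[
\sum_{k=0}^{\lfloor n/2\rfloor}\dim_{\CC}\Hom_G(\pi,T_k)
\;=\;\sum_{k=0}^{\lfloor n/2\rfloor}\sum_{\gamma\in D/D(\rho)}\dim_{\CC}\Hom_H\bigl(\rho,\Ind_{G_k}^H({}^\gamma\psi_k)\bigr).
\]

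Part~(1) is then immediate from Theorem~\ref{LocalCase}(1): the left side is at most $1$, so the full double sum on the right is at most $1$. Restricting to a single coset $\gamma D(\rho)$ gives~\eqref{unique}, and two distinct cosets yielding nonzero inner sums would push the double sum above $1$, contradicting the bound; hence $\gamma$ is unique modulo $D(\rho)$. Part~(2) is the same argument with equality: for unitarizable $\rho$ (hence unitarizable $\pi$, in characteristic $0$) Theorem~\ref{LocalCase}(2) gives $\sum_k\dim_{\CC}\Hom_G(\pi,T_k)=1$, so exactly one pair $(k,\gamma D(\rho))$ contributes, and contributes exactly $1$. I expect the main subtlety to lie in verifying the multiplicity-free decomposition of $\pi|_H$ with components indexed precisely by $D/D(\rho)$ (rather than by $D$ modulo some larger subgroup), which requires carefully combining the multiplicity-one statements of~\cite{BusKut93,Tad92} with the identification $G(\rho)=HD(\rho)$.
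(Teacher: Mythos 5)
Your proposal is correct, and its overall strategy coincides with the paper's: embed $\rho$ in the restriction of an irreducible (and, for part (2), unitarizable) representation $\pi$ of $G$, use Clifford theory for the pair $H\subset G$ together with Frobenius reciprocity to identify $\dim_\CC\Hom_G(\pi,T_k)$ with $\sum_{\gamma\in D/D(\rho)}\dim_\CC\Hom_H(\rho,\Ind_{G_k}^H({}^\gamma\psi_k))$, and then quote Theorem~\ref{LocalCase}. The execution, however, is genuinely different and arguably cleaner. The paper works on the induction side: it applies Mackey's theorem to $\Hom_G(\Ind_{HZ}^G\rho,\,T_k)$, computes the left-hand side to have dimension $(G(\rho):HZ)$ using the structure of $\Ind_{HZ}^G\rho$ as a multiplicity-free sum of character twists of $\pi$ (citing Tadi\`c), cancels that factor from both sides, and then needs the auxiliary Lemma~\ref{embed} plus a case split to treat representations $\pi$ with no Klyachko model and to force the uniqueness of $k$. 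You work on the restriction side, using the multiplicity-one decomposition $\pi|_H\cong\bigoplus_{d\in D/D(\rho)}{}^d\rho$; this is equivalent, via Frobenius reciprocity, to the statement about $\Ind_{HZ}^G\rho$ that the paper extracts from \cite[Cor.~2.5, Prop.~2.7]{Tad92}, so it rests on exactly the same external input (the subtlety you correctly flag). Your payoff is the single identity $\sum_k\dim_\CC\Hom_G(\pi,T_k)=\sum_k\sum_{\gamma\in D/D(\rho)}\dim_\CC\Hom_H(\rho,\Ind_{G_k}^H({}^\gamma\psi_k))$, which delivers the bound~\eqref{unique}, the uniqueness of $k$, the uniqueness of $\gamma$ modulo $D(\rho)$, and the vacuous case where $\pi$ has no model all at once, with no need for Lemma~\ref{embed}. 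The intermediate identifications you use ($\Hom_{G_k}({}^d\rho,\psi_k)=\Hom_{G_k}(\rho,{}^{d^{-1}}\psi_k)$, the reduction $G(\rho)=HD(\rho)$, and the well-definedness of each summand on cosets of $D(\rho)$) all check out, so the argument is complete modulo the cited results of Tadi\`c.
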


\begin{proof}
Let $(\rho ,W)$ be an irreducible admissible representation of $H$ and let $\gamma\in D$.  Let $(\pi,V)$ be an irreducible admissible representation of $G$ in which $(\rho,W)$ embeds as a direct summand.  If $\pi$ has no Klyachko model, then $\Hom_H(\rho, \Ind_{G_k}^H({}^\gamma\psi_k))$ must be trivial for all integers $k$ by Lemma~\ref{embed}, so (\ref{unique}) holds.  Hence suppose from now on that $\pi$ embeds in $T_k$ for some integer $k$ with $0\leq 2k\leq n$.

Viewing $\rho$ as a representation of $HZ$ as above, Mackey's theorem~\cite[Exer. 4.5.5]{Bump97} implies that we have an isomorphism
\begin{equation}
\label{mackey}
\Hom_G \left(\Ind_{HZ}^G (\rho),T_k\right) \cong \bigoplus_{\delta\in G/HZ}\Hom_{G_k} ({}^\delta\rho,\psi_k).
\end{equation}
A straightforward argument using Mackey's theorem shows that $\Ind_{HZ}^G (\rho)$ is the direct sum of $(G(\rho):HZ)$ irreducible admissible representations of $G$, each of which is obtained from $\pi$ via twisting by an appropriate one-dimensional character of $G$~\cite[Cor.~2.5, Prop.~2.7]{Tad92}.  Since characters of $G$ are trivial on $G_k$, each of these representations occurs with multiplicity one in $T_k$ since $\pi$ does.  Thus the dimension of the space on the left-hand side of (\ref{mackey}) is $(G(\rho):HZ)$.

Now consider the right-hand side of (\ref{mackey}).  As $\delta$ ranges over $G/HZ$, $^\delta\rho$ ranges over $(G:G(\rho))$ distinct representations of $G$, each one occurring $(G(\rho):HZ)$ times.  Hence the right-hand side of (\ref{mackey}) is a direct sum of $(G(\rho):HZ)$ copies of 
$$\bigoplus_{\delta\in G/G(\rho)}\Hom_{G_k} ({}^\delta\rho,\psi_k).$$
Together with the preceding paragraph, this implies that
\begin{equation}
\label{mult_one}
\sum_{\delta\in G/G(\rho)}\dim_\CC\left(\Hom_{G_k} ({}^\delta\rho,\psi_k)\right) = 1.
\end{equation}
Note that
$$\Hom_{G_k} ({}^\delta\rho,\psi_k) = \Hom_{G_k} (\rho,{}^{\delta^{-1}}\psi_k) =  \Hom_{H} (\rho,\Ind_{G_k}^H({}^{\delta^{-1}}\psi_k)).$$
Also note that we may assume that our representatives for the cosets in $G/G(\rho)$ lie in $D$.  Thus we can rewrite (\ref{mult_one}) to obtain
\begin{equation*}
\sum_{\delta\in D/D(\rho)}\dim_\CC\left(\Hom_{H} (\rho,\Ind_{G_k}^H({}^\delta\psi_k))\right) = 1.
\end{equation*}
This implies that there is a $\gamma\in D$, unique modulo $D(\rho)$, such that $\Hom_{H} (\rho,\Ind_{G_k}^H({}^\gamma\psi_k))$ is nontrivial.  This forces $\Hom_{H} (\rho,\Ind_{G_k}^H({}^\gamma\psi_k))$ to be one-dimensional.

Now suppose $\rho$ also embeds in $\Ind_{G_k}^H({}^\delta\psi_l)$ for some integer $l$ and $\delta\in D$.  Then Lemma~\ref{embed} implies that $\pi$ also embeds in $T_l$, which forces $l=k$ by the uniqueness of the Klyachko model of $\pi$.
This concludes the proof of (1) in the case that $\pi$ has a Klyachko model, and shows that in this case (\ref{unique}) is an equality.

Now suppose that $\rho$ is unitarizable.  Note that statement (2) now follows from (1) as soon as it is shown that the representation $\pi$ has a Klyachko model.  But by~\cite[Prop.~2.2, 2.7]{Tad92}, we may assume that $\pi$ is itself unitarizable.  Hence by Theorem~\ref{LocalCase}, $\pi$ has a Klyachko model.
\end{proof}

We will say that the representation $\rho$ of $H$ possesses a Klyachko model if $\Hom_H(\rho, \Ind_{G_k}^H({}^\gamma\psi_k))$ is nontrivial for some integer $k$ and some $\gamma\in D$.  Note that Theorem \ref{main} can be adjusted to be a statement for Klyachko models for the finite group $\SL(n, \FF_q)$, which sharpens the results in \cite[Prop. 1]{Vin06}.  

We will need the following for an application of Theorem \ref{main}.

\begin{lemma}
\label{stabilizer}
Let $k$ be an integer, $0\leq 2k\leq n$ and let $d = (2k,n)$.  Suppose $\psi$ is a character of $G_k$ that is trivial on $\left\{ \left( \begin{array}{cc} 1_{n-2k} & X \\   & 1_{2k} \end{array} \right) \Big| X \in M_{n-2k, 2k}(F) \right\}$.  Then the equivalence class of $\Ind_{G_k}^H(\psi)$ is stable under conjugation by $D^d$.
\end{lemma}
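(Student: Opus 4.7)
The plan is to construct, for each $g \in D^d$, an element $h \in H = \SL(n,F)$ which normalizes $G_k$ and satisfies ${}^h\psi = {}^g\psi$. Given such an $h$, conjugation by $h$ (an inner automorphism of $H$) yields $\Ind_{G_k}^H(\psi) \cong {}^h\Ind_{G_k}^H(\psi) = \Ind_{G_k}^H({}^h\psi) = \Ind_{G_k}^H({}^g\psi)$, and combined with the identity ${}^g\Ind_{G_k}^H(\psi) = \Ind_{G_k}^H({}^g\psi)$ (which holds because $g$ normalizes both $H$ and $G_k$), this gives the claimed stability. Since $g \in D \subseteq N_G(G_k)$, producing such an $h$ is equivalent to producing $n := g^{-1}h$ in the subgroup $N_0 := \mathrm{Stab}_{N_G(G_k)}(\psi)$ with $\det(n) = \det(g)^{-1}$; under the identification $G/H \cong F^\times$ via the determinant, the task therefore reduces to showing the inclusion $(F^\times)^d \subseteq \det(N_0)$.

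To build elements of $N_0$, I will consider the block-diagonal matrices
\[
h(a,\mu) \;=\; \diag\bigl(aI_{n-2k},\; h_S(\mu)\bigr), \qquad a,\mu \in F^\times,
\]
where $h_S(\mu)$ is a diagonal symplectic similitude of $\Sp(2k,F)$ with similitude factor $\mu$ (for instance, with its diagonal entries arranged so that each pair matched by the symplectic form multiplies to $\mu$, forcing $\det h_S(\mu) = \mu^k$). The scalar $aI_{n-2k}$ trivially normalizes $U_{n-2k}$, and $h_S(\mu)$ normalizes $\Sp(2k,F)$ by virtue of being a similitude, so $h(a,\mu) \in N_G(G_k)$. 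The key verification is that $h(a,\mu) \in N_0$: a direct block computation shows that conjugation by $h(a,\mu)$ leaves the $N$-block of any $\alpha \in G_k$ unchanged, merely rescales the $X$-block, and replaces the $S$-block by a conjugate that still lies in $\Sp(2k,F)$. Since $\psi$ is trivial on the $X$-block by hypothesis, and since $\Sp(2k,F)$ has no nontrivial characters (it is perfect for $k \geq 1$ over the infinite field $F$, and the case $k=0$ is vacuous), these effects are invisible to $\psi$, so ${}^{h(a,\mu)}\psi = \psi$.

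Finally, $\det h(a,\mu) = a^{n-2k}\mu^k$. As $a$ and $\mu$ range independently over $F^\times$, these determinants fill out the subgroup
\[
(F^\times)^{n-2k} \cdot (F^\times)^k \;=\; (F^\times)^{\gcd(n-2k,\,k)} \;=\; (F^\times)^{\gcd(n,\,k)}.
\]
Because $\gcd(n,k)$ divides $d = (2k,n)$, we conclude that $(F^\times)^d \subseteq (F^\times)^{\gcd(n,k)} \subseteq \det(N_0)$, which is precisely what is required.

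The main subtlety, I expect, will be the verification that $h(a,\mu)$ stabilizes $\psi$: the hypothesis on triviality of $\psi$ along the $X$-block is essential, for otherwise the rescaling of the $X$-entries by $a^{-1}$ would produce a nontrivial additive twist, and the perfectness of $\Sp(2k,F)$ is what allows the conjugation of the $S$-block by the similitude $h_S(\mu)$ to leave $\psi$ unchanged.
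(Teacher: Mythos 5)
Your proof is correct, but it takes a genuinely different route from the paper's and in fact establishes a slightly stronger statement. The paper observes that $D^d$ is generated by $D^n$ and $D^{n-2k}$ and treats the two pieces separately: $D^n$ lies in $HZ$, so conjugation by it fixes the equivalence class of any induced representation from a subgroup of $H$, while an element $\delta=\diag(a^{n-2k},1,\ldots,1)$ of $D^{n-2k}$ lies in the same $H$-coset as $\alpha=\diag(a,\ldots,a,1,\ldots,1)$, which visibly normalizes $G_k$ and fixes $\psi$. Your $h(a,1)$ is essentially this $\alpha$, but you adjoin a second family of $\psi$-stabilizing normalizers, the symplectic similitudes on the lower block, whose determinants $\mu^k$ are not exploited in the paper's argument; consequently you obtain stability under $D^{\gcd(n-2k,k)}=D^{\gcd(n,k)}$, which contains $D^d$ (strictly in some cases, e.g.\ the symplectic case $2k=n$, where you get $D^{n/2}$ rather than $D^n$) and would correspondingly sharpen Corollary \ref{Lpacket}. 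The price is that you must know $\psi$ restricts trivially to the $\Sp(2k,F)$ factor --- your appeal to perfectness of $\Sp(2k,F)$ over the infinite field $F$ is the right justification and is genuinely needed, since the hypothesis of the lemma only gives triviality on the $X$-block --- whereas the paper's element $\alpha$ centralizes the symplectic block and so never has to touch it. The supporting steps in your argument all check out: the reduction to showing $(F^\times)^d\subseteq\det(N_0)$ (valid because $N_0$ is a group and $g$ normalizes both $H$ and $G_k$), the block conjugation computation, and the identity $(F^\times)^{n-2k}\cdot(F^\times)^{k}=(F^\times)^{\gcd(n-2k,k)}=(F^\times)^{\gcd(n,k)}\supseteq(F^\times)^{d}$.
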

\begin{proof}
Suppose $\delta\in D^n$.  Since $\det\delta$ is an $n$th power, $\delta\in HZ$.  Thus
$${}^\delta(\Ind_{G_k}^H(\psi)) \cong \Ind_{G_k}^H(\psi).$$
Thus $D^n$ stabilizes the equivalence class of $\Ind_{G_k}^H(\psi_k)$.

Now suppose $\delta\in D^{n-2k}$ so that $\delta = {\rm diag}(a^{n-2k},1,\ldots ,1)$ for some $a\in F^\times$.  
Let $\alpha = {\rm diag}(a, \ldots ,a,1,\ldots ,1)\in G$, where  the blocks of $a$'s and $1$'s have respective lengths $n-2k$ and $2k$.  Note that $\delta\in\alpha H$ and that conjugation by $\alpha$ fixes $\psi$.  Thus
$${}^\delta(\Ind_{G_k}^H(\psi)) = {}^\alpha(\Ind_{G_k}^H(\psi)) = \Ind_{G_k}^H({}^\alpha\psi) = \Ind_{G_k}^H(\psi).$$
Therefore, $D^{n-2k}$ stabilizes the equivalence class of $\Ind_{G_k}^H(\psi)$.

It follows from the preceding paragraphs that the group generated by $D^n$ and $D^{n-2k}$ stabilizes the equivalence class of $\Ind_{G_k}^H(\psi)$.  To complete the proof, note that this group is precisely $D^d$.
\end{proof}

The Local Langlands Correspondence for $\GL(n)$~\cite{HarTay01,Hen00} gives a bijection from the set of equivalence classes of irreducible representations of $G$ to a set consisting of certain $n$-dimensional complex representations of the Weil-Deligne group $W_F'$ of $F$.  The existence of the Langlands Correspondence for $\SL(n)$ follows from this by the work of Gelbart and Knapp~\cite{GelKna82}.  Here the equivalence classes of irreducible representations of $H$ are parameterized by certain homomorphisms from $W_F'$ to ${\rm PGL}(n, \CC)$.  Moreover, in the case of $\SL(n)$, the correspondence is now many-to-one; the fibers of the parameterization are the \textit{$L$-packets} of $H$.  In~\cite[Thm.~4.1]{GelKna82}, it is shown that the $L$-packets of $H$ coincide with the orbits of $G$ on equivalence classes of irreducible representations of $H$.  Thus if $\rho$ is an irreducible admissible representation of $H$, the size of the $L$-packet containing $\rho$ is precisely $(G:G(\rho)) = (D:D(\rho))$.  The following result gives a relationship between the Klyachko model of a representation $\rho$ of $H$ and the size of the $L$-packet containing $\rho$.

\begin{corollary} \label{Lpacket}
Let $k$ be an integer, $0\leq 2k\leq n$.  Let $d = (2k,n)$.  If the irreducible admissible representation $\rho$ of $H$ occurs in $\Ind_{G_k}^H({}^\gamma \psi_k)$ for some $\gamma\in D$, then $D^d\subset D(\rho)$.  In particular, the size of the $L$-packet of $\rho$ is at most the index of $(F^\times)^d$ in $F^\times$.  Thus if $d=1$, then $\rho$ must be stable, that is, the $L$-packet containing $\rho$ is a singleton.
\end{corollary}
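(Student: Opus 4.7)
The plan is to combine Lemma~\ref{stabilizer} with the uniqueness clause of Theorem~\ref{main}(1): conjugating the inducing character ${}^\gamma\psi_k$ by any $\delta\in D^d$ preserves the induced representation up to equivalence, so the corresponding shift in $\gamma$ must be absorbed by $D(\rho)$. Granted this, the stated bound on the $L$-packet size is immediate from the identification of that size with $(D:D(\rho))$ recorded before the corollary, and the case $d=1$ follows because $D^d=D$.

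To execute the argument I would first verify that the hypothesis of Lemma~\ref{stabilizer} is satisfied by $\psi={}^\gamma\psi_k$. Since $\psi_k$ is trivial on the $X$-block subgroup appearing in the statement of Lemma~\ref{stabilizer}, and since conjugation by $\gamma\in D$ normalizes that subgroup, the twist ${}^\gamma\psi_k$ is also trivial there. Lemma~\ref{stabilizer} then produces, for each $\delta\in D^d$, the chain
$$\Ind_{G_k}^H({}^{\delta\gamma}\psi_k)\;=\;{}^\delta\Ind_{G_k}^H({}^\gamma\psi_k)\;\cong\;\Ind_{G_k}^H({}^\gamma\psi_k),$$
where the equality is functoriality of induction (together with ${}^\delta({}^\gamma\psi_k)={}^{\delta\gamma}\psi_k$, valid because $D\cong F^\times$ is abelian) and the isomorphism is the content of Lemma~\ref{stabilizer} applied to $\psi={}^\gamma\psi_k$.

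Combining this displayed equivalence with the hypothesis $\Hom_H(\rho,\Ind_{G_k}^H({}^\gamma\psi_k))\ne 0$, I would obtain $\Hom_H(\rho,\Ind_{G_k}^H({}^{\delta\gamma}\psi_k))\ne 0$ as well. The uniqueness-modulo-$D(\rho)$ clause of Theorem~\ref{main}(1) then forces $\delta\gamma\equiv\gamma\pmod{D(\rho)}$; since $D$ is abelian, this rearranges to $\delta\in D(\rho)$, so $D^d\subset D(\rho)$. Hence $(D:D(\rho))\le(D:D^d)=[F^\times:(F^\times)^d]$, yielding the bound on the $L$-packet size; when $d=1$, $D^d=D$ forces $D(\rho)=D$, so the $L$-packet is a singleton and $\rho$ is stable.

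I do not anticipate any real obstacle: the corollary is a direct consequence of Lemma~\ref{stabilizer}, the uniqueness portion of Theorem~\ref{main}(1), and the identification of $L$-packet size with $(D:D(\rho))$. The only point requiring a moment's care is confirming that the twist by $\gamma\in D$ preserves the $X$-block triviality hypothesis of Lemma~\ref{stabilizer}, which is immediate from the fact that $\gamma$ normalizes that block.
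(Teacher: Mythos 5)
Your proposal is correct and follows essentially the same route as the paper's proof: apply Lemma~\ref{stabilizer} to see that $D^d$ stabilizes $\Ind_{G_k}^H({}^\gamma\psi_k)$, then invoke the uniqueness-modulo-$D(\rho)$ clause of Theorem~\ref{main}(1) to force $\delta\in D(\rho)$, with the $L$-packet bound following from the identification of its size with $(D:D(\rho))$. Your explicit check that ${}^\gamma\psi_k$ satisfies the triviality hypothesis of Lemma~\ref{stabilizer} is a small point the paper leaves implicit, but the argument is the same.
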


\begin{proof}
Recalling that $D\cong F^\times$, the second and third statements follow immediately from the first, which we now verify.  It follows from Lemma~\ref{stabilizer} that $D^d$ stabilizes $\Ind_{G_k}^H({}^\gamma \psi_k)$.  Let $\delta\in D^d$.  Then $\rho$ occurs in $\Ind_{G_k}^H({}^{\gamma\delta} \psi_k)$.  By the uniqueness statement in Theorem~\ref{main}, we must then have that $\gamma\delta\in\gamma D(\rho)$ so $\delta\in D(\rho)$.
\end{proof}
 
\section{Self-dual representations}

Let $G$ be a totally disconnected locally compact group with $(\pi, V)$ an irreducible admissible representation of $G$, and $\iota$ a continuous automorphism of $G$ such that $\iota^2$ is the identity.  Let $(\hat{\pi}, \hat{V})$ denote the smooth contragredient of $(\pi, V)$, where $\hat{V}$ is the smooth dual of $V$, and define the representation $({^\iota \pi}, V)$ by ${^\iota \pi} = \pi \circ \iota$.  From Schur's Lemma, the representation $\pi$ satisfies ${^\iota \pi} \cong \hat{\pi}$ if and only if there exists a nondegenerate bilinear form, unique up to scalar multiple, say $B: V \times V \rightarrow \CC$, such that
\begin{equation} \label{Bform}
B(\pi(g)v, {^\iota \pi}(g)w) = B(v, w) \, \, \text{for all } v, w \in V, g \in G.
\end{equation}
It follows that $B$ must be either symmetric, in which case we write $\veps_{\iota}(\pi) = 1$, or skew-symmetric, in which case we write $\veps_{\iota}(\pi) = -1$.  If ${^\iota \pi} \not\cong \hat{\pi}$, then we let $\veps_{\iota}(\pi) = 0$.  When $\iota$ is the trivial automorphism, then ${^\iota \pi} = \pi \cong \hat{\pi}$ just means that $\pi$ is self-dual.  In this case, we simply write $\veps(\pi)$ for $\veps_{\iota}(\pi)$.  If $\pi$ is self-dual and $\veps(\pi) = 1$, we say $\pi$ is {\em orthogonal}, and if $\veps(\pi) = -1$, we say $\pi$ is {\em symplectic}.

We begin with the following, which is a slight generalization of \cite[Lemma 2.1]{PrRa08}.  Since the proof is virtually identical to the proof in \cite{PrRa08}, we just give an outline.

\begin{lemma} \label{selfduallemma1}
Let $(\pi, V)$ be an irreducible, admissible, and unitarizable representation of the totally disconnected locally compact group $G$, and let $\iota$ be a continuous automorphism of $G$ such that $\iota^2$ is the identity.  Then $\veps_{\iota}(\pi) = 1$ if and only if there exists a conjugate linear automorphism $\varphi: V \rightarrow V$ such that $\varphi^2 = 1$, and $\varphi({^\iota \pi}(g)v) = \pi(g) \varphi(v)$ for all $v \in V$ and all $g \in G$.
\end{lemma}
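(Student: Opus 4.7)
The plan is to use the $G$-invariant Hermitian inner product $\langle\cdot,\cdot\rangle$ on $V$ supplied by unitarizability (taken linear in the first variable) as a bridge between the bilinear form $B$ witnessing $\veps_\iota(\pi)=1$ and the conjugate-linear intertwiner $\varphi$. The map $\Psi: V \to \hat{V}$ given by $w \mapsto \langle\cdot,w\rangle$ is a $G$-equivariant conjugate-linear bijection (by admissibility and unitarity), and this will let us convert linear intertwiners into conjugate-linear ones.

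For the forward direction, starting from a nonzero symmetric $G$-invariant bilinear form $B$, I would let $\Phi: V \to \hat{V}$ be the complex-linear map $w \mapsto B(\cdot, w)$, which intertwines ${}^\iota\pi$ with $\hat{\pi}$. Then $\varphi := \Psi^{-1}\circ\Phi$ is automatically conjugate-linear and satisfies $\varphi\circ {}^\iota\pi(g) = \pi(g)\circ\varphi$ by construction, and is a bijection since $B$ is nondegenerate. To arrange $\varphi^2 = 1$ I would apply Schur's lemma to write $\varphi^2 = c\cdot\Id$, observe $c \in \RR$ by comparing $\varphi\circ\varphi^2$ with $\varphi^2\circ\varphi$, and then compute $\langle v, \varphi^2(v) \rangle$ two ways: directly this equals $c\,\|v\|^2$, but by the defining identity for $\varphi$ it also equals $B(v, \varphi(v)) = B(\varphi(v), v) = \langle \varphi(v), \varphi(v) \rangle = \|\varphi(v)\|^2 \geq 0$, where symmetry of $B$ is used in the middle step. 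Hence $c > 0$, and rescaling $\varphi$ by $1/\sqrt{c}$ produces the required involution.

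For the converse, given such a $\varphi$, I would define $B(v, w) := \langle v, \varphi(w) \rangle$. Bilinearity follows because $\varphi$ is conjugate-linear; $G$-invariance $B(\pi(g)v, {}^\iota\pi(g)w) = B(v, w)$ is immediate from the intertwining identity together with unitarity; and $B$ is nonzero since $B(\varphi(w), w) = \|\varphi(w)\|^2$. By Schur uniqueness, the space of $G$-invariant bilinear forms on the irreducible $V$ is one-dimensional, so $B$ must be either symmetric or skew-symmetric. To rule out the skew case I would set $u = v + \varphi(v)$: using $\varphi^2 = 1$ gives $\varphi(u) = u$, hence $B(u, u) = \|u\|^2$, which is nonzero unless $u = 0$; in the latter case $\varphi(v) = -v$ and $B(v, v) = -\|v\|^2$. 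Either way some vector has $B(v, v) \neq 0$, forcing $B$ symmetric and $\veps_\iota(\pi) = 1$.

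The main obstacle I anticipate is the positivity step $c > 0$ in the forward direction: this is exactly where symmetry of $B$ plays a role, and the parallel calculation with a skew-symmetric $B$ would instead yield $c < 0$ and obstruct real rescaling, so this sign is what distinguishes $\veps_\iota = +1$ from $\veps_\iota = -1$.
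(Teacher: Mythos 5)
Your proof is correct, and its skeleton is the same as the paper's: both directions are mediated by the correspondence $B(v,w)=\langle v,\varphi(w)\rangle$, using the conjugate-linear bijection $w\mapsto\langle\cdot,w\rangle$ from $V$ to $\hat V$ to pass between linear intertwiners ${}^\iota\pi\to\hat\pi$ and conjugate-linear self-intertwiners of $V$. Where you diverge is in how the signs are pinned down. The paper appeals twice to the uniqueness of the invariant Hermitian form up to a \emph{positive} real scalar: once to show $\langle v,w\rangle=\overline{\langle\varphi(v),\varphi(w)\rangle}$ (forcing $B$ symmetric), and once to produce the scalar $\lambda>0$ used to renormalize $w\mapsto w'$ into an involution. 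You instead apply Schur's lemma directly to $\varphi^2$, show the resulting scalar $c$ is real by conjugate-linearity, and then obtain $c>0$ from the computation $c\|v\|^2=B(\varphi(v),v)=\|\varphi(v)\|^2$, which is exactly where symmetry of $B$ enters; in the converse you rule out the skew-symmetric alternative by exhibiting an explicit non-isotropic vector ($v+\varphi(v)$, or $v$ itself when $\varphi(v)=-v$). Your version is somewhat more self-contained and makes visible precisely where the dichotomy $\veps_\iota=\pm1$ is decided (the paper's corresponding steps are stated rather tersely), at the cost of a few extra Schur-type verifications; the paper's version is shorter because it outsources the positivity to the uniqueness of the unitary structure. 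Both are complete arguments.
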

\begin{proof}  Since $(\pi, V)$ is unitarizable, there is a positive definite Hermitian form $\langle \cdot, \cdot \rangle$ on $V$ which is $G$-invariant.  First assume there exists a conjugate linear automorphism $\varphi$ on $V$ with the above properties.  If we define a bilinear form $B$ by $B(v, w) = \langle v, \varphi(w) \rangle$, then it follows that $B$ is nondegenerate and satisfies (\ref{Bform}).  To prove that $B$ is symmetric, it is enough to show that $\langle v, w \rangle = \overline{\langle \varphi(v), \varphi(w) \rangle}$, which follows from the uniqueness of $\langle \cdot, \cdot \rangle$ up to positive scalar multiple.

Conversely, suppose that $B$ is a nondegenerate symmetric form on $V$ which satisfies (\ref{Bform}).  Any element of the smooth dual $\hat{V}$ of $V$ is of the form $\langle \cdot, w \rangle$, for a unique $w \in V$.  For any $w \in V$, the map $u \mapsto B(u, w)$ is a smooth linear functional of $V$, and so there is a unique $w'$ such that $B(u,w) = \langle u, w' \rangle$.  This defines a conjugate linear map $w \mapsto w'$ on $V$.  Now, we must have $\langle v, w \rangle = \lambda \overline{\langle v', w' \rangle}$, for all $v, w \in V$ and for some positive real number $\lambda$, by uniqueness of the Hermitian form $\langle \cdot, \cdot \rangle$.  If we define $\varphi(v) = \sqrt{\lambda} v'$, then $\varphi:V \rightarrow V$ has the desired properties. 
\end{proof}

The next result is a generalization of \cite[Cor. 2.2]{PrRa08}, and we again use an argument very similar to the one appearing there.

\begin{lemma} \label{selfduallemma2}
Let $(\pi, V)$ be an irreducible, admissible, and unitarizable representation of $G$, let $\iota$ be a continuous automorphism of $G$ such that $\iota^2$ is the identity, and let $H$ be a closed subgroup of $G$ which is stable under $\iota$.  Let $\psi$ be a one-dimensional representation of $H$ such that ${^\iota \psi} = \bar{\psi}$, and such that $\dim_{\CC} \, \Hom_H (\pi, \psi) = 1$.  If ${^\iota \pi} \cong \hat{\pi}$, then $\veps_{\iota}(\pi) = 1$.
\end{lemma}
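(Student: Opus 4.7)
The plan is to apply Lemma~\ref{selfduallemma1}: it suffices to construct a conjugate-linear automorphism $\varphi:V\to V$ with $\varphi^2 = \mathrm{Id}$ and $\varphi({}^\iota\pi(g)v) = \pi(g)\varphi(v)$ for all $g\in G$ and $v\in V$, at which point the lemma yields $\veps_\iota(\pi) = 1$.

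First I would combine the two natural structures on $V$. Since $\pi$ is unitarizable, fix a $G$-invariant positive definite Hermitian form $\langle\cdot,\cdot\rangle$ on $V$; admissibility makes the antilinear map $v\mapsto \langle \cdot, v\rangle$ a bijection from $V$ onto $\hat V$. Let $B$ be the nondegenerate bilinear form on $V$ afforded by the isomorphism ${}^\iota\pi \cong \hat\pi$, satisfying (\ref{Bform}). Define a conjugate-linear map $\varphi_0: V\to V$ by
$$B(u,v) = \langle u, \varphi_0(v)\rangle.$$
This $\varphi_0$ is an automorphism by nondegeneracy of $B$, and a short calculation using the $G$-invariance of both forms (together with $\pi(g^{-1})^{*} = \pi(g)$) gives $\varphi_0({}^\iota\pi(g)v) = \pi(g)\varphi_0(v)$. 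Consequently $\varphi_0^2$ is a complex-linear $G$-endomorphism of the irreducible $(\pi,V)$, and Schur's lemma forces $\varphi_0^2 = c\cdot \mathrm{Id}$ for some $c \in \CC^\times$. Once $c\in\RR_{>0}$ is established, the rescaling $\varphi := \varphi_0/\sqrt c$ is still conjugate-linear and still intertwines ${}^\iota\pi$ with $\pi$, and now satisfies $\varphi^2 = \mathrm{Id}$, completing the proof via Lemma~\ref{selfduallemma1}.

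The positivity of $c$ is where the hypotheses on $\psi$ and the one-dimensional model enter. Let $\ell$ span $\Hom_H(\pi,\psi)$, and consider $f: v \mapsto \overline{\ell(\varphi_0(v))}$, which is complex-linear. Using the intertwining property of $\varphi_0$ (in the form $\varphi_0(\pi(h)v) = \pi(\iota h)\varphi_0(v)$) together with the hypothesis ${}^\iota\psi = \bar\psi$, one checks $f(\pi(h)v) = \psi(h) f(v)$, so $f \in \Hom_H(\pi,\psi)$. One-dimensionality gives $f = \mu\ell$ for some $\mu \in \CC$, and $\mu \neq 0$ since $\varphi_0$ is surjective and $\ell \neq 0$. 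Iterating the resulting relation $\ell(\varphi_0(v)) = \bar\mu\,\overline{\ell(v)}$ and invoking $\varphi_0^2 = c\cdot\mathrm{Id}$ yields $c\,\ell(v) = \ell(\varphi_0^2 v) = |\mu|^2\ell(v)$, hence $c = |\mu|^2 > 0$.

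The main obstacle is the conjugation bookkeeping in this last step: the hypothesis ${}^\iota\psi = \bar\psi$ is exactly what is needed to convert the a priori conjugate-linear functional $\ell\circ\varphi_0$, which transforms under $H$ via ${}^\iota\psi$, into the honest element $f$ of $\Hom_H(\pi,\psi)$ so that the one-dimensionality hypothesis becomes applicable. The rest of the argument is routine, and the overall strategy closely parallels that of \cite[Cor.~2.2]{PrRa08}.
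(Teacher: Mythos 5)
Your proposal is correct and follows essentially the same route as the paper: both construct a conjugate-linear intertwiner (your $\varphi_0$, defined via the bilinear form, is the paper's $\eta = L^{-1}\circ T$), use the one-dimensional $\psi$-model together with ${}^\iota\psi=\bar\psi$ to show the Schur scalar is $|\mu|^2>0$, rescale, and invoke Lemma \ref{selfduallemma1}. The only differences are cosmetic (rescaling by $1/\sqrt{c}$ rather than by $\lambda^{-1}$).
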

\begin{proof} Let $\langle \cdot, \cdot \rangle$ denote the $G$-invariant Hermitian form on $V$.  We know that ${^\iota \pi} \cong \hat{\pi}$, and say $T: V \rightarrow \hat{V}$ is the corresponding intertwining operator.  There is also a conjugate linear isomorphism $L: V \rightarrow \hat{V}$ given by $L(w) = \langle \cdot, w \rangle$, and note that $L$ satisfies $L(\pi(g)v) = \hat{\pi}(g) L(v)$ for all $g \in G$, $v \in V$.  Then $\eta = L^{-1} \circ T$ is a conjugate linear automorphism of $V$ satisfying $\eta({^\iota \pi}(g) v) = \pi(g) \eta(v)$ for all $g \in G$, $v \in V$.  By Schur's lemma, we must have $\eta^2 = \alpha$, where $\alpha$ is some nonzero complex scalar.

Now, let $\ell \in \Hom_H(\pi, \psi)$, and define $\tilde{\ell}: V \rightarrow \CC$ by $\tilde{\ell} (v) = \overline{\ell(\eta(v))}$.  Then, for any $h \in H$, $v \in V$, we have
$$ \tilde{\ell}(\pi(h) v) = \overline{\ell({^\iota \pi}(h) \eta(v))} = \overline{ {^\iota \psi}(h) \ell(\eta(v))} = \psi(h) \tilde{\ell}(v),$$
since $\eta(\pi(h) v) = {^\iota \pi}(h) \eta(v)$ and ${^\iota \psi} = \bar{\psi}$.  So, $\tilde{\ell} \in \Hom_H(\pi, \psi)$, and we must have $\tilde{\ell} = \lambda \ell$ for some nonzero complex scalar $\lambda$.  Since we then have $\ell(\eta(v)) = \overline{\lambda \ell(v)}$ for all $v$, then by substituting $\eta(v)$ for $v$, and from the fact $\eta^2(v) = \alpha v$, we obtain $\alpha \ell(v) = \bar{\lambda} \lambda \ell(v)$.  We now have $\alpha = \bar{\lambda} \lambda$, and we define $\varphi = \lambda^{-1} \eta$.  Now, $\varphi: V \rightarrow V$ is a conjugate linear automorphism such that $\varphi^2 = 1$ and $\varphi({^\iota \pi}(g) v) = \pi(g) \varphi(v)$ for all $g \in G$, $v \in V$.  By Lemma \ref{selfduallemma1}, we have $\veps_{\iota}(\pi) = 1$.
\end{proof}

If $(\pi, V)$ is an irreducible admissible representation of $G$, and $z$ is an element of the center of $G$, then it follows from Schur's lemma that $\pi(z)$ acts as a scalar on $V$, which we denote by $\omega_{\pi}(z)$.  The next result follows directly from \cite[Prop. 2]{Vin06}.

\begin{lemma} \label{selfduallemma3}
Let $s \in G$ such that $s^2 = z$ is in the center of $G$.  Define the automorphism $\iota$ on $G$ by $\iota(g) = s^{-1} g s$, so $\iota^2$ is the identity.  Then for any irreducible admissible representation $(\pi, V)$ of $G$, we have $\veps(\pi) = \omega_{\pi}(z) \veps_{\iota}(\pi)$.
\end{lemma}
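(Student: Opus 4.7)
The plan is to construct a bilinear form witnessing the $\iota$-twisted duality of $\pi$ directly from one witnessing the ordinary self-duality of $\pi$, and then read off the sign relation by comparing symmetries. Since $\iota$ is inner conjugation by $s$, the operator $\pi(s)$ intertwines ${^\iota \pi}$ with $\pi$, so ${^\iota \pi}\cong \pi$; consequently $\pi\cong \hat\pi$ precisely when ${^\iota \pi}\cong \hat\pi$, and $\veps(\pi)=0$ if and only if $\veps_\iota(\pi)=0$, in which case both sides of the claimed identity vanish. So I may assume $\veps(\pi)\neq 0$ and fix a nondegenerate bilinear form $B$ on $V$ satisfying $B(\pi(g)v,\pi(g)w)=B(v,w)$ for all $g\in G$ and $B(w,v)=\veps(\pi)B(v,w)$.

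Next I would set $B_\iota(v,w):=B(v,\pi(s)w)$ and check that it realizes the $\iota$-twisted duality of $\pi$. Using $\pi(s)\pi(\iota(g))=\pi(s\cdot s^{-1}gs)=\pi(g)\pi(s)$, the $G$-invariance of $B$ yields
\[
B_\iota(\pi(g)v, {^\iota \pi}(g)w) = B(\pi(g)v,\pi(g)\pi(s)w) = B(v,\pi(s)w) = B_\iota(v,w),
\]
and nondegeneracy of $B_\iota$ is immediate from that of $B$ together with invertibility of $\pi(s)$. By uniqueness up to scalar of the form associated with $\veps_\iota(\pi)$, any symmetry computation performed on $B_\iota$ then determines $\veps_\iota(\pi)$.

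The final step is to compute this symmetry. On one hand, $B_\iota(w,v)=B(w,\pi(s)v)=\veps(\pi)B(\pi(s)v,w)$. On the other hand, $G$-invariance of $B$ applied to $g=s$ gives $B(\pi(s)v,w)=B(v,\pi(s)^{-1}w)$; and the hypothesis $s^2=z\in Z(G)$ together with Schur's lemma forces $\pi(s)^2=\omega_\pi(z)\Id$, so $\pi(s)^{-1}=\omega_\pi(z)^{-1}\pi(s)$. Combining these gives $B_\iota(w,v)=\veps(\pi)\omega_\pi(z)^{-1}B_\iota(v,w)$, so $\veps_\iota(\pi)=\veps(\pi)\omega_\pi(z)^{-1}$, which rearranges to the claimed identity. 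The only real obstacle is the bookkeeping point that $\pi(s)$ is not itself scalar (since $s$ need not be central), but only $\pi(s)^2$ is; this is precisely what lets us convert $\pi(s)^{-1}$ into a scalar multiple of $\pi(s)$ and produce the factor $\omega_\pi(z)$.
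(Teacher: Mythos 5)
Your argument is correct and complete. The paper itself gives no proof of this lemma, deferring entirely to \cite[Prop.~2]{Vin06}; your construction --- using $\pi(s)$ as an intertwiner ${}^\iota\pi\to\pi$ to reduce the vanishing question, twisting the self-duality pairing to $B_\iota(v,w)=B(v,\pi(s)w)$, and extracting the sign from $\pi(s)^{-1}=\omega_\pi(z)^{-1}\pi(s)$ --- is precisely the standard argument behind that citation, so you have in effect supplied the outsourced proof rather than deviated from it. One cosmetic remark: your conclusion reads $\veps_\iota(\pi)=\veps(\pi)\,\omega_\pi(z)^{-1}$ while the lemma has $\omega_\pi(z)$ rather than its inverse; this is harmless both because the relation rearranges directly to $\veps(\pi)=\omega_\pi(z)\,\veps_\iota(\pi)$ and because self-duality of $\pi$ forces $\omega_\pi=\omega_\pi^{-1}$ on the center, so $\omega_\pi(z)=\pm 1$. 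You might state that last observation explicitly, since it also confirms a posteriori that both sides of the identity lie in $\{0,\pm 1\}$ as they must.
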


In \cite[Sec. 3, Ex. (2)]{Pr99}, Prasad describes when a generic self-dual representation of $\SL(n, F)$ is orthogonal and when it is symplectic (excluding the case that $n$ is $2$ mod $4$ and $F$ does not contain a square root of $-1$).  Here, we extend these results to include any self-dual irreducible admissible representation which is unitarizable.

\begin{corollary} \label{SLnselfdual}
Let $F$ be a nonarchimedean local field of characteristic $0$, and let $(\pi, V)$ be a self-dual, irreducible, admissible, and unitarizable representation of $H = \SL(n, F)$.  Then
\begin{enumerate}
\item[(1)]  If $n$ is odd or $n \equiv 0($mod $4)$, then $\varepsilon(\pi) = 1$.
\item[(2)]  If $n \equiv 2($mod $4)$ and $F$ contains a square root of $-1$, then $\varepsilon(\pi) = 1$ if and only if the central element $-I$ of $\SL(n, F)$ acts trivially on $V$, that is, $\veps(\pi) = \omega_{\pi}(-I)$.
\end{enumerate}
\end{corollary}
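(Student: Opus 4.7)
The plan is to combine the Klyachko model of $\pi$ guaranteed by Theorem~\ref{main}(2) with Lemmas~\ref{selfduallemma2} and~\ref{selfduallemma3}. Unitarizability gives a unique $k$ with $0\leq 2k\leq n$ and a coset $\gamma D(\pi)$ with $\dim_\CC\Hom_{G_k}(\pi,{}^\gamma\psi_k)=1$ (by Frobenius reciprocity applied to the Klyachko embedding). The goal is to construct $s\in H$ such that the inner automorphism $\iota:h\mapsto s^{-1}hs$ preserves $G_k$, satisfies ${}^\iota({}^\gamma\psi_k)=\overline{{}^\gamma\psi_k}$, and has $s^2=z\in Z(H)$. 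Since $\iota$ is inner in $H$, one has ${}^\iota\pi\cong\pi$ automatically, and combined with self-duality this gives ${}^\iota\pi\cong\hat\pi$; Lemma~\ref{selfduallemma2} then yields $\veps_\iota(\pi)=1$, and Lemma~\ref{selfduallemma3} yields $\veps(\pi)=\omega_\pi(z)$. Parts~(1) and~(2) correspond to the outcomes $z=I$ and $z=-I$, respectively.

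The candidate for $s$ is built from $s_0=\diag(1,-1,1,-1,\ldots,(-1)^{n-2k-1},e_1,\ldots,e_{2k})$. Conjugation by the alternating first $n-2k$ entries sends each superdiagonal entry $a_{i,i+1}$ of the unipotent $N$-block to $-a_{i,i+1}$, which converts ${}^\gamma\psi_k$ into its complex conjugate (the twist parameter $\gamma$ is negated along with the rest). The trailing block $(e_1,\ldots,e_{2k})$ need only be chosen in $\mathrm{GSp}(2k,F)$ so that conjugation preserves the $\Sp(2k,F)$-factor of $G_k$; since $\psi_k$ is trivial on both the $X$-block and the $S$-block, these choices are irrelevant to the character condition. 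The remaining constraints are $\det(s)=1$ and $s^2\in Z(H)$, handled by cases. When $n$ is odd or $n\equiv 0\pmod 4$ with $k$ even, taking all $e_i=1$ (and possibly replacing $s_0$ by $-s_0$) produces $s\in H$ with $s^2=I$, so $z=I$. When $n\equiv 0\pmod 4$ with $k$ odd, the $N$-block contributes determinant $-1$, which is cancelled by choosing $(e_1,\ldots,e_{2k})=(1,\ldots,1,-1,\ldots,-1)$: this is a diagonal element of $\mathrm{GSp}(2k,F)$ of similitude multiplier $-1$ and determinant $(-1)^k=-1$; again $s^2=I$ and $z=I$. This establishes~(1). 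For $n\equiv 2\pmod 4$ with $i=\sqrt{-1}\in F$, take $s=i\cdot s_0$ with the $e_i$ chosen by the same recipes so that $\det(s_0)=-1$; since $i^n=-1$, this forces $\det(s)=1$, while $s^2=-I$, so $z=-I$ and $\veps(\pi)=\omega_\pi(-I)$. Because $(-I)^2=I$, the scalar $\omega_\pi(-I)$ lies in $\{\pm 1\}$ and equals $+1$ precisely when $-I$ acts trivially on $V$, which is~(2).

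The main obstacle is the combinatorics of constructing $s$: the four constraints (membership in $\SL(n,F)$, central square, preservation of $G_k$, inversion of ${}^\gamma\psi_k$) interact subtly enough to force the split by $n\pmod 4$ and by the parity of $k$. In particular, the case $n\equiv 0\pmod 4$ with $k$ odd is only handled by exploiting the freedom in the symplectic similitude block, and the case $n\equiv 2\pmod 4$ genuinely requires an auxiliary central scalar $i$ to absorb the residual sign of $\det(s_0)$; this is precisely why the hypothesis $\sqrt{-1}\in F$ must appear in~(2).
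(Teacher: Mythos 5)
Your proposal is correct and follows essentially the same route as the paper: invoke Theorem~\ref{main}(2) to get $\dim_\CC\Hom_{G_k}(\pi,{}^\gamma\psi_k)=1$, build an explicit $s\in H$ with central square that normalizes $G_k$ and conjugates ${}^\gamma\psi_k$ to its complex conjugate, and then apply Lemmas~\ref{selfduallemma2} and~\ref{selfduallemma3}. The only difference is cosmetic: the paper uses a single alternating $\pm1$ (or $\pm\beta$) diagonal across all $n$ entries chosen by $n\bmod 4$, which avoids your sub-case analysis on the parity of $k$, but your construction (alternating on the unipotent block plus a similitude adjustment on the symplectic block) is equally valid.
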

\begin{proof} By Theorem \ref{main}(2), there exists a $k$, $0 \leq 2k \leq n$, and a $\gamma \in D$, such that
$$ \dim_{\CC} \, \Hom_G(\pi, \Ind_{G_k}^H({^\gamma \psi_k})) = \dim_{\CC} \, \Hom_{G_k} (\pi, {^\gamma \psi_k}) = 1.$$
If $n \equiv 0({\rm mod}\; 4)$, then define $s = \diag(-1, 1, \ldots, -1, 1)$; if $n \equiv 3({\rm mod} \;4)$ then define $s = \diag(-1, 1,\ldots, 1, -1)$; and if $n \equiv 1({\rm mod} \; 4)$, then define $s = \diag(1, -1,\ldots, -1, 1)$.  Then $s \in H$, $s^2 = I$, and if we define $\iota$ on $H$ by $\iota(g) = s^{-1} g s$, then $G_k$ is stable under $\iota$.  We have ${^\gamma \psi_k}(s^{-1} h s) = \overline{{^\gamma \psi_k}(h)}$ for every $h \in G_k$, and ${^\iota \pi} \cong \pi \cong \hat{\pi}$, since $\pi$ is self-dual.  By Lemma \ref{selfduallemma2}, we have $\veps_{\iota}(\pi) = 1$, and by Lemma \ref{selfduallemma3} we have $\veps(\pi) = 1$, as desired.

Now suppose that $n \equiv 2({\rm mod} \; 4)$, and that $F$ contains a square root of $-1$, and say $\beta \in F$ such that $\beta^2 = -1$.  Define $s = \diag(\beta, -\beta,\ldots, \beta, -\beta)$, and define $\iota$ on $H$ by $\iota(g) = s^{-1} g s$.  Then $s^2 = -I$, and $G_k$ is stable under $\iota$.  Like before, we have ${^\gamma \psi_k}(s^{-1} h s) = \overline{{^\gamma \psi_k}(h)}$ for every $h \in G_k$, and also ${^\iota \pi} \cong \hat{\pi}$.  By Lemmas \ref{selfduallemma2} and \ref{selfduallemma3}, we conclude that $\veps(\pi) = \omega_{\pi}(-I)$.
\end{proof}

\noindent
{\bf Remarks. } In \cite[Sec. 6]{Vin06}, the second-named author studies the values of $\veps_{\iota}(\pi)$, where $\pi$ is an irreducible admissible representation of $\GL(n, F)$, and $\iota$ is the transpose-inverse automorphism composed with conjugation by the longest Weyl element.  The statement in \cite[Thm. 8]{Vin06} that $\veps_{\iota}(\pi) = 1$ for all such $\pi$ does not have a complete proof there.  What is actually proved is that if $\pi$ is an irreducible admissible representation of $\GL(n, F)$, and there exists a character $\psi$ of the maximal unipotent subgroup such that ${^\iota \psi} = \bar{\psi}$ and $\pi$ has a unique $\psi$-degenerate Whittaker model, then $\veps_{\iota}(\pi) = 1$.  Also, the conclusion cannot be made in \cite[Sec. 3]{Vin06} using similar methods that $\veps_{\iota}(\pi) = 1$ for every irreducible representation $\pi$ of the finite group $\GL(n, \FF_q)$.  However, this statement is already known to be true for the finite group $\GL(n, \FF_q)$, while this is still an open question for the $p$-adic group $\GL(n, F)$.

For the statement in \cite[Thm. 8]{Vin06} that $\veps(\pi) = 1$ for every self-dual, irreducible, admissible representation $\pi$ of $\GL(n, F)$, the proof is complete.  It is possible that similar methods could be used to extend Corollary \ref{SLnselfdual} to all self-dual irreducible admissible representations of $\SL(n, F)$.

\bigskip

\noindent
\begin{tabular}{ll}
\textsc{Department of Mathematics and Statistics}\\
\textsc{American University}\\
\textsc{4400 Massachusetts Avenue, NW}\\
\textsc{Washington, DC  20016}\\
{\em email}:  {\tt lansky@american.edu}\\
\end{tabular}

\bigskip

\noindent
\begin{tabular}{ll}
\textsc{Department of Mathematics}\\ 
\textsc{College of William and Mary}\\
\textsc{P. O. Box 8795}\\
\textsc{Williamsburg, VA  23187}\\
{\em email}:  {\tt vinroot@math.wm.edu}\\
\end{tabular}

\end{document}